\keywords{homotopy type theory, quasi-inverse, adjoint equivalence, 2-adjunction}
\begin{document}

\title{2-adjoint equivalences in homotopy type theory}

\author[D.~Carranza]{Daniel Carranza}
\address{Department of Mathematics, University of Western Ontario}
\urladdr{http://daniel-carranza.github.io/}
\email{dcarran@uwo.ca}

\author[J.~Chang]{Jonathan Chang}
\address{Department of Combinatorics and Optimization, University of Waterloo}
\email{jchan767@uwo.ca}

\author[K.~Kapulkin]{Krzysztof Kapulkin}
\address{Department of Mathematics, University of Western Ontario}
\email{kkapulki@uwo.ca}

\author[R.~Sandford]{Ryan Sandford}
\address{Department of Computer Science, University of Western Ontario}
\email{rsandfo@uwo.ca}

\begin{abstract}
	\noindent We introduce the notion of (half) 2-adjoint equivalences in Homotopy Type Theory and prove their expected properties.
	We formalized these results in the Lean Theorem Prover.
\end{abstract}

\maketitle
	
	\section*{Introduction}

There are numerous notions of equivalence in homotopy type theory: bi-invertible maps, contractible maps, and half adjoint equivalences.
Other natural choices, such as quasi-invertible maps and adjoint equivalences, while \emph{logically} equivalent to the above, are not propositions, making them unsuitable to serve as \emph{the} definition of an equivalence.
One can use a simple semantical argument, which in essence comes down to analyzing subcomplexes of the nerve of the groupoid $(0 \cong 1)$, to see why some definitions work and others do not.
The conclusion here is that while the definition as a ``half $n$-adjoint equivalence'' gives us a proposition, the definition as a ``(full) $n$-adjoint equivalence'' does not.

In this paper, we take the first step towards expressing these results internally in type theory, putting special emphasis on their formalization.
In particular, we revisit the notions of a quasi-invertible map, a half adjoint equivalence, and an adjoint equivalence, giving the formal proofs of their expected properties.
Our proofs are more modular than those given in \cite{HoTT:book}, and help improve efficiency.
We then turn our attention to corresponding notions arising from $2$-adjunctions, namely half $2$-adjoint equivalences and (full) $2$-adjoint equivalences, and show that while the former is always a proposition, the latter fails to be one in general.
	 
As indicated above, the results proven here certainly will not come as
a surprise to experts and they constitute merely the first step
towards understanding general $n$-adjoint equivalences.
One can therefore envision future work in which the notions of $3$-,
$4$-, \ldots and, more generally, $n$-adjoint equivalence are studied.
We have chosen not to pursue this direction, simply because the
corresponding notions of $3$-, $4$-, and $n$-adjunction have not ---
to our knowledge --- received rigorous treatment in literature on
category theory.
In particular, it is not immediately clear what the higher-dimensional
analog of the coherences appearing in the swallow-tail identities
ought to be (cf.~$\mathsf{Coh} \: \eta$ in Definition \ref{D:two_hae}).
Having said that, we believe that the approach developed here can
serve as a blueprint for proving analogous properties of $n$-adjoint
equivalences when these notions are introduced.

	These results have been formalized using the Lean Theorem Prover, version 3.4.2
		(\url{https://github.com/leanprover/lean}) as part of the HoTT in Lean 3
		library (\url{https://github.com/gebner/hott3}); the formalization consists of 528 lines of code across 3 files
		and may be found in the directory
			\href{https://github.com/gebner/hott3/tree/master/src/hott/types/2_adj}
			{\texttt{hott3/src/hott/types/2\_adj}}.
		We write \texttt{file/name} for a newly-formalized result, where \texttt{file}
			denotes the file it is found in and \texttt{name} denotes the name of the formal proof in the code.
		
	\section*{Organization.} Section 1 recalls the necessary background on equivalences which
		will be used throughout. 
		Section 2 introduces new formal proofs that the types of quasi-inverses and  adjoint equivalences
		are not propositions. Note that specific examples where this fails are presented, but not formally
		proven since the current version of the HoTT in Lean 3 library does not contain
		induction principles for the higher inductive types $S^1$ and $S^2$.
		Section 3 introduces half 2-adjoint equivalences, which are propositions
		containing the data of adjoint equivalences, as well as  2-adjoint equivalences, which
		are non-propositions related to both quasi-inverses and adjoint equivalences.
		
	\section{Preliminaries}
	We largely adopt the notation of \cite{HoTT:book}, with additional and differing notation stated here.
	We notate the $\mathsf{ap}$ function for $f : A \to B$ by
	\[
		f[ - ] : (x = y) \to (fx = fy).
	\]
	For $\mathsf{ap}_2$, the action of $f$ on 2-dimensional paths, we write
	\[
		f \llbracket - \rrbracket : (p = q) \to (f[p] = f[q]).
	\]
	We write $\mathsf{refl}$ for the homotopy $\lambda x. \mathsf{refl}_x \colon \mathsf{id}_A \sim \mathsf{id}_A$.
	For a homotopy $H : f \sim g$ between dependent functions $f, g : \prod\limits_{x : A} B x$
	 	and a non-dependent function $h : C \to A$, we write
	\[
		H_h : fh \sim gh
	\]
	for the composition of $H$ and $h$. If $f, g : A \to B$ are non-dependent and we instead have $h : B \to C$,
		we write
	\[
		h[H] : hf \sim hg
	\]
	for the composition of $h[-]$ and $H$. Given an additional homotopy $H' : f \sim g$ and $\alpha : H \sim H'$,
		we similarly write
	\[
		h \llbracket \alpha \rrbracket : h[H] \sim h[H']
	\]
	for the composition of $h \llbracket - \rrbracket$ and $\alpha$. Lastly,
	for $H : f \sim g$ and $H' : g \sim h$, we write transitivity of homotopies as
	\[
		H \cdot H' : f \sim h
	\] 
	in path-concatenation order.
		
	\begin{defi}[\texttt{adj/qinv}, \texttt{adj/is\_hadj\_l}]
		A function $f : A \to B$ \begin{enumerate}
			\item has a \emph{quasi-inverse} if the following type is inhabited:
		\[
			\mathsf{qinv}\: f :\equiv \sum_{g : B \to A} gf \sim \mathsf{id}_A
				\times fg \sim \mathsf{id}_B.
		\]
			\item is a \emph{half-adjoint equivalence} if the following type is inhabited:
		\[
			\mathsf{ishadj} \: f :\equiv \sum_{g : B \to A} \  \sum_{\eta : gf \sim \mathsf{id}_A} \ 
				\sum_{\varepsilon : fg \sim \mathsf{id}_B} f[\eta] \sim \varepsilon_f.
		\]
			\item is a \emph{left half-adjoint equivalence} if the following type is inhabited:
		\[
			\mathsf{ishadjl}\: f :\equiv \sum_{g : B \to A} \  \sum_{\eta : gf \sim \mathsf{id}_A} \ 
				\sum_{\varepsilon : fg \sim \mathsf{id}_B} \eta_{g} \sim g[\varepsilon].
		\]
		For types $A, B : \mathcal{U}$, the type of equivalences between $A$ and $B$ is:
		\[
			A \simeq B :\equiv \sum_{f : A \to B} \mathsf{ishadj} \: f.
		\]
		\end{enumerate}
	\end{defi}
	\begin{thmC}[{\cite[Lem.~4.2.2, Thms.~4.2.3,~4.2.13]{HoTT:book}}]
		For $f : A \to B$, there are maps
		\[
			\begin{tikzcd}[column sep=small]
				\mathsf{ishadj} \: f \ar[rr, leftrightarrow, "\simeq"] \ar[rd, leftrightarrow] 
					&& \mathsf{ishadjl} \: f \ar[ld, leftrightarrow] \\
				& \mathsf{qinv} \: f
			\end{tikzcd}
		\]
		where the top two types are propositions. \qed
	\end{thmC}

	The perhaps most intuitive definition of an equivalence between types $A, B : \mathcal{U}$ is that of a quasi-inverse. 
		However, as this type is not a proposition, we 
		define equivalences to be half adjoint equivalences. Since both half and left half adjoint equivalences 
		are propositional types, one could also define
		the type of equivalences to be left half adjoint equivalences.
	
	With a well-behaved notion of equivalence, we present the remaining lemmas to be used throughout.
	\begin{lem}[Equivalence Induction/Univalence, {\cite[Cor.~5.8.5]{HoTT:book}}] \label{T:equivInduction}
		Given $D : \prod_{A, B : \mathcal{U}} (A \simeq B) \to \mathcal{U}$ and
			$d : \prod_{A : \mathcal{U}} D(A, A, \mathsf{id}_A)$, there exists 
		\[
			f : \prod_{A, B : \mathcal{U}} \  \prod_{e : A \simeq B} D(A, B, e)
		\]
		such that $f(A, A, \mathsf{id}_A) = d(A)$ for all $A : \mathcal{U}$. \qed
	\end{lem}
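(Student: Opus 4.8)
The plan is to deduce this from univalence by reducing, via path induction, to the case of the identity equivalence. Univalence asserts precisely that the canonical map $\mathsf{idtoeqv} : (A = B) \to (A \simeq B)$, which sends $\mathsf{refl}_A$ to $\mathsf{id}_A$, is an equivalence; write $\mathsf{ua}$ for a chosen quasi-inverse and $\beta : \mathsf{idtoeqv} \circ \mathsf{ua} \sim \mathsf{id}$ for the accompanying homotopy. The strategy is to pull the given family $D$ back along $\mathsf{idtoeqv}$ to obtain a family over the \emph{identity type} $A = B$, where the ordinary eliminator (the $J$-rule of Lemma-style path induction) applies, and then push the resulting section forward along $\mathsf{ua}$.

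Concretely, first I would define $D' : \prod_{A, B : \mathcal{U}} (A = B) \to \mathcal{U}$ by $D'(A, B, p) :\equiv D(A, B, \mathsf{idtoeqv}(p))$. Since $\mathsf{idtoeqv}(\mathsf{refl}_A)$ is the identity equivalence $\mathsf{id}_A$, the hypothesis $d$ supplies a point $d(A) : D'(A, A, \mathsf{refl}_A)$. Path induction then yields $f' : \prod_{A, B : \mathcal{U}} \prod_{p : A = B} D'(A, B, p)$ with $f'(A, A, \mathsf{refl}_A) \equiv d(A)$. To transport this back to equivalences I set $f(A, B, e) :\equiv \mathsf{transport}^{D(A,B,-)}(\beta_e, f'(A, B, \mathsf{ua}(e)))$, where the component $\beta_e : \mathsf{idtoeqv}(\mathsf{ua}(e)) = e$ retypes $f'(A, B, \mathsf{ua}(e)) : D(A, B, \mathsf{idtoeqv}(\mathsf{ua}(e)))$ as an element of $D(A, B, e)$.

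It remains to verify the computation rule $f(A, A, \mathsf{id}_A) = d(A)$, and this is where I expect the real work to lie. Unwinding, $f(A, A, \mathsf{id}_A)$ is $f'(A, A, \mathsf{ua}(\mathsf{id}_A))$ transported along $\beta_{\mathsf{id}_A}$; since $\mathsf{ua}$ is a quasi-inverse, $\mathsf{ua}(\mathsf{id}_A)$ agrees with $\mathsf{refl}_A$, at which point $f'$ reduces to $d(A)$, but making this precise requires controlling how $\beta_{\mathsf{id}_A}$ interacts with the retraction homotopy of $\mathsf{ua}$ — exactly the triangle-identity-style coherence relating the two univalence homotopies. A cleaner way to sidestep most of this bookkeeping is to observe that univalence makes $\sum_{B : \mathcal{U}} (A \simeq B)$ contractible with centre $(A, \mathsf{id}_A)$: it is fibrewise equivalent, via $\mathsf{idtoeqv}$, to the contractible singleton $\sum_{B : \mathcal{U}} (A = B)$, and contractibility transfers across equivalences. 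Fixing $A$, the desired $\prod_{B} \prod_{e} D(A, B, e)$ is then a section of a family over this contractible type, so the induction principle for contractible types produces it from the single datum $d(A)$ at the centre and delivers the computation rule directly. Either way, the only genuine obstacle is the coherence ensuring the value at $\mathsf{id}_A$ reduces to $d(A)$; the remainder is routine path algebra.
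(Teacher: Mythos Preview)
Your argument is correct and is the standard derivation of equivalence induction from univalence; both variants you sketch (transporting along the univalence homotopy, or first showing $\sum_{B} (A \simeq B)$ is contractible) work, and the second is indeed the cleaner way to obtain the computation rule. Note, however, that the paper does not actually prove this lemma: it is stated with a bare \textsf{qed} and a citation to \cite[Cor.~5.8.5]{HoTT:book}, so there is no proof in the paper to compare against --- your proposal is essentially the argument the cited reference gives.
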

	\begin{lem}[\texttt{prelim/sigma\_hty\_is\_contr}, {\cite[Cor.~5.8.6, Thm.~5.8.4]{HoTT:book}}]
		\label{T:isContrSigma}
		Given $f : A \to B$, the types 
		\[
			\sum_{g : B \to A} f \sim g \text{ and } \sum_{g : B \to A} g \sim f
		\] 
		are both contractible with center $(f, \mathsf{refl}_{f})$. \qed
	\end{lem}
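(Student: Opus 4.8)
The plan is to transport the statement from homotopies to paths, where the required contractibility is standard. By function extensionality, for each $g : B \to A$ the map $\mathsf{happly}$ is an equivalence $(f = g) \simeq (f \sim g)$, and it carries the reflexivity path $\mathsf{refl} : f = f$ to the reflexive homotopy $\mathsf{refl}_f :\equiv \lambda x.\, \mathsf{refl}_{fx}$. Since a fiberwise equivalence induces an equivalence of total types, we obtain
\[
	\Bigl(\sum_{g : B \to A} f = g\Bigr) \simeq \Bigl(\sum_{g : B \to A} f \sim g\Bigr),
\]
under which the point $(f, \mathsf{refl})$ on the left is sent to the point $(f, \mathsf{refl}_f)$ on the right.

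The remaining ingredient is the contractibility of singletons: for any type $X$ and $a : X$, the type $\sum_{x : X} a = x$ is contractible with center $(a, \mathsf{refl})$. Instantiating $X :\equiv (B \to A)$ and $a :\equiv f$ shows that $\sum_{g} f = g$ is contractible with center $(f, \mathsf{refl})$. As contractibility is preserved along equivalences, and the center is transported as computed above, the type $\sum_{g} f \sim g$ is contractible with center $(f, \mathsf{refl}_f)$, which is the first claim.

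For the second type I would argue symmetrically, replacing the singleton $\sum_{x} a = x$ with its dual $\sum_{x} x = a$, which is likewise contractible with center $(a, \mathsf{refl})$, and using instead the fiberwise equivalence $(g = f) \simeq (g \sim f)$. I expect the only genuine point of care to be bookkeeping: confirming that each equivalence sends the canonical center to $(f, \mathsf{refl}_f)$, rather than merely establishing bare contractibility. Since $\mathsf{happly}$ applied to a reflexivity path computes definitionally to the reflexive homotopy, this verification is routine, but it is the one step where attention is actually required.
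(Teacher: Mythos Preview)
Your argument is correct and is the standard one: reduce to contractibility of based path spaces via the fiberwise equivalence $\mathsf{happly}$ supplied by function extensionality, and check that the canonical centers match. The paper does not actually give a proof of this lemma---it simply cites \cite[Cor.~5.8.6, Thm.~5.8.4]{HoTT:book} and marks it with \qed---so there is no alternative approach to compare against; your proof is precisely the content behind those citations.
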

	
	\begin{lemC}[{\cite[Lem.~4.2.5]{HoTT:book}}] \label{T:fibeqchar}
		For any $f : A \to B$, $y : B$ and $(x, p), (x', p') : \mathsf{fib}_f \: y$, we have
		\[
			(x, p) = (x', p') \simeq \sum_{\gamma : x = x'} p = f[\gamma] \cdot p'.
      \tag*{\qed}
		\]
	\end{lemC}
	\begin{lemC}[{\cite[Thm.~4.2.6]{HoTT:book}}] \label{T:fibcontr}
		If $f : A \to B$ is a half-adjoint equivalence, then for any $y : B$ the fiber $\mathsf{fib}_f \: y$
			is contractible. \qed
	\end{lemC}
	
	\section{Quasi-inverses and adjoint equivalences, revisited}
	
	We present a proof that the type of quasi-inverses is not a proposition, using
		Lemma \ref{T:isContrSigma} for increased modularity over the proof presented in \cite[Lem.~4.1.1]{HoTT:book}.
	\begin{thm}[\texttt{adj/qinv\_equiv\_pi\_eq}] \label{T:qinvpi}
		Given $f : A \to B$ such that $\mathsf{ishadj}\: f$ is inhabited, we have
		\[
			\mathsf{qinv}\: f \simeq \prod_{x : A} x = x.
		\]
	\end{thm}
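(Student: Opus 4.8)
The plan is to reduce to the case $f = \mathsf{id}_A$ by equivalence induction and then contract away one of the two redundant homotopies using Lemma~\ref{T:isContrSigma}. A witness of $\mathsf{ishadj}\:f$ packages $f$ into an equivalence $e :\equiv (f, -) : A \simeq B$; since $\mathsf{ishadj}\:f$ is a proposition, the choice of witness is irrelevant, so this is well-defined. I would then apply equivalence induction (Lemma~\ref{T:equivInduction}) to the family $D(A, B, e) :\equiv \bigl(\mathsf{qinv}\:(\mathsf{pr}_1 e) \simeq \prod_{x : A} x = x\bigr)$. This reduces the goal to supplying, for each $A$, a term $d(A) : \mathsf{qinv}\:\mathsf{id}_A \simeq \prod_{x : A} x = x$, i.e. to the special case of the identity map.

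For that special case I would first unfold the definition of $\mathsf{qinv}\:\mathsf{id}_A$. Using the definitional equalities $g\,\mathsf{id}_A \equiv g \equiv \mathsf{id}_A\, g$, the type becomes $\sum_{g : A \to A} (g \sim \mathsf{id}_A) \times (g \sim \mathsf{id}_A)$. Reassociating the $\Sigma$- and product-formers (the second factor not depending on the chosen homotopy) presents this as $\sum_{(g, \eta) : \sum_{g : A \to A} (g \sim \mathsf{id}_A)} (g \sim \mathsf{id}_A)$, a $\Sigma$-type whose base $\sum_{g : A \to A} (g \sim \mathsf{id}_A)$ is contractible with center $(\mathsf{id}_A, \mathsf{refl})$ by Lemma~\ref{T:isContrSigma}. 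Invoking the standard fact that a $\Sigma$-type over a contractible base is equivalent to the fiber over its center, the whole type is equivalent to $(\mathsf{id}_A \sim \mathsf{id}_A)$. Finally, $(\mathsf{id}_A \sim \mathsf{id}_A) \equiv \prod_{x : A} (\mathsf{id}_A\,x = \mathsf{id}_A\,x) \equiv \prod_{x : A} x = x$ holds definitionally, which furnishes $d(A)$ and hence the theorem.

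I expect the main obstacle to be the bookkeeping in the reduction step rather than any deep difficulty: one must set up the motive $D$ so that equivalence induction yields precisely the identity-map instance, and then track the reassociation carefully enough to confirm that the fiber over the center $(\mathsf{id}_A, \mathsf{refl})$ is \emph{exactly} $\mathsf{id}_A \sim \mathsf{id}_A$ (as opposed to a transported or otherwise decorated variant). Managing the definitional collapses $g\,\mathsf{id}_A \equiv g$ so that Lemma~\ref{T:isContrSigma} applies verbatim, and chaining the resulting equivalences in the right order, is where the care is needed; each individual step is routine.
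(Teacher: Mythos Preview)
Your proposal is correct and follows essentially the same route as the paper: reduce to $f=\mathsf{id}_A$ by equivalence induction, unfold $\mathsf{qinv}\:\mathsf{id}_A$, reassociate the $\Sigma$-types, and contract the base $\sum_{g}(g\sim\mathsf{id}_A)$ via Lemma~\ref{T:isContrSigma} to leave $\mathsf{id}_A\sim\mathsf{id}_A\equiv\prod_{x:A}x=x$. Your additional remarks about packaging the $\mathsf{ishadj}$ witness into an equivalence and tracking the fiber over the center are accurate elaborations of steps the paper leaves implicit.
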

	\begin{proof}
		By Equivalence Induction \ref{T:equivInduction}, 
			it suffices to show $\mathsf{qinv} \: \mathsf{id}_A \simeq 
			\prod\limits_{x : A} x = x$. Observe that
		\begin{align}
			\mathsf{qinv} \: \mathsf{id}_A &\equiv \sum_{g : A \to A} 
				g  \sim \mathsf{id}_A \times
				g \sim \mathsf{id}_A \nonumber \\
			&\simeq \sum_{g : A \to A} \  \sum_{\eta : g \sim \mathsf{id}_A}
				g \sim \mathsf{id}_A \nonumber \\
			&\simeq \sum_{u : \sum\limits_{g : A \to A} 
				g \sim \mathsf{id}_A} 
				\mathsf{pr}_1 \: u \sim \mathsf{id}_A \nonumber \\
			\label{eq:qinvline} &\simeq \mathsf{id}_A \sim \mathsf{id}_A  \\
			&\equiv \prod_{x : A} x = x, \nonumber
		\end{align}
		where (\ref*{eq:qinvline}) follows from Lemma \ref{T:isContrSigma} (the type $\sum\limits_{g : A \to A} 
			g \sim \mathsf{id}_A$ is contractible with center $(\mathsf{id}_A, \mathsf{refl})$).
	\end{proof}
	This result implies that any type with non-trivial $\pi_1$ 
			may be used to construct non-trivial inhabitants of this type. 
			For instance, since $\pi_1(S^1) = \mathbb{Z}$, we have:
	\begin{cor}
		The type $\mathsf{qinv} \: \mathsf{id}_{S^1}$ is not a proposition. \qed
	\end{cor}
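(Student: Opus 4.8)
The plan is to use the equivalence established in Theorem~\ref{T:qinvpi}. Since a proposition is closed under equivalence (being a proposition is itself a proposition-valued property invariant under equivalence of types), it suffices to show that $\prod_{x : S^1} x = x$ fails to be a proposition, and then transport this failure back along the equivalence $\mathsf{qinv}\:\mathsf{id}_{S^1} \simeq \prod_{x : S^1} x = x$. Note that the hypothesis of Theorem~\ref{T:qinvpi} is satisfied here: $\mathsf{id}_{S^1}$ is trivially a half-adjoint equivalence, taking $g = \mathsf{id}_{S^1}$ and $\eta = \varepsilon = \mathsf{refl}$, with the coherence $\mathsf{id}_{S^1}[\mathsf{refl}] = \mathsf{refl} = \mathsf{refl}_{\mathsf{id}_{S^1}}$ holding definitionally.

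First I would reduce to exhibiting two unequal inhabitants of $\prod_{x : S^1} x = x$. The two natural candidates are $\lambda x.\,\mathsf{refl}_x$, the constantly-reflexivity section, and the ``loop'' section, which sends the basepoint $\mathsf{base}$ to $\mathsf{loop}$ (and is extended over all of $S^1$ by the appropriate recursion/induction into the identity-type family). To show these are distinct, I would apply both to $\mathsf{base}$: the first yields $\mathsf{refl}_{\mathsf{base}}$ and the second yields $\mathsf{loop}$. If the two sections were equal, then by applying the evaluation-at-$\mathsf{base}$ map we would obtain $\mathsf{refl}_{\mathsf{base}} = \mathsf{loop}$ in the type $\mathsf{base} = \mathsf{base}$.

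The final contradiction comes from the computation $\pi_1(S^1) = \mathbb{Z}$, i.e.\ the standard fact that $(\mathsf{base} = \mathsf{base}) \simeq \mathbb{Z}$ with $\mathsf{refl}_{\mathsf{base}}$ corresponding to $0$ and $\mathsf{loop}$ corresponding to $1$. Since $0 \neq 1$ in $\mathbb{Z}$, transporting the hypothetical equality $\mathsf{refl}_{\mathsf{base}} = \mathsf{loop}$ across this equivalence yields $0 = 1$, a contradiction. Hence the two sections are genuinely distinct inhabitants, so $\prod_{x : S^1} x = x$ is not a proposition, and therefore neither is $\mathsf{qinv}\:\mathsf{id}_{S^1}$.

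The main obstacle, as the authors flag in the ``Organization'' paragraph, is that this argument cannot be formalized in the current library: it rests essentially on the induction principle for $S^1$ (to define and reason about the loop section) and on the computation $\pi_1(S^1) = \mathbb{Z}$, neither of which is available in the stated version of HoTT in Lean~3. Thus the intended contribution here is only the informal mathematical argument sketched above, with the honest caveat that the key inputs---the $S^1$-induction needed to construct the nontrivial section and the $\pi_1(S^1) = \mathbb{Z}$ computation distinguishing $\mathsf{loop}$ from $\mathsf{refl}$---are imported as known results rather than reproven.
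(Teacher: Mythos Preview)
Your argument is correct and follows essentially the same route as the paper: the paper's justification is simply the one-line remark that $\pi_1(S^1)=\mathbb{Z}$ is nontrivial, together with Theorem~\ref{T:qinvpi}, and you have spelled out in detail how that nontriviality yields two distinct inhabitants of $\prod_{x:S^1} x=x$. Your added discussion of the formalization caveat matches the paper's own disclaimer in the Organization section.
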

	Conceptually, this proof takes the pair $(g, \eta)$ and uses Lemma \ref{T:isContrSigma} to contract it
		so that only one homotopy remains.  
		This differs from the proof in \cite{HoTT:book}, which uses function extensionality
		to write the homotopies as paths and contracts using based path induction. 
		This proof modularizes the proof in \cite{HoTT:book} by packaging function
		extensionality and rewriting of contractible types into one result, simplifying both the proof and the
		formalization.
		
	Thus, the type of half and left half adjoint equivalences each append an additional coherence 
		to contract with the remaining homotopy. However, appending both coherences
		gives us a non-proposition.
	\begin{defi}[\texttt{adj/adj}]
		Given $f : A \to B$, the structure of an \emph{adjoint equivalence} on $f$ is the type:
		\[
			\mathsf{adj}\: f :\equiv \sum_{g : B \to A} \  \sum_{\eta : gf \sim \mathsf{id}_A} \ 
				\sum_{\varepsilon : fg \sim \mathsf{id}_B} \ 
				f[\eta] \sim \varepsilon_f \times \eta_g \sim g[\varepsilon].
		\]
	\end{defi}
	\begin{thm}[\texttt{adj/adj\_equiv\_pi\_refl\_eq}] \label{T:adjequiv}
		Given $f : A \to B$ such that $\mathsf{ishadj}\: f$ is inhabited, we have
		\[
			\mathsf{adj}\: f \simeq \prod_{x : A} (\mathsf{refl}_x = \mathsf{refl}_x).
		\]
	\end{thm}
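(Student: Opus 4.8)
The plan is to mirror the proof of Theorem \ref{T:qinvpi} essentially verbatim, one level up. By Equivalence Induction (Lemma \ref{T:equivInduction}) it suffices to construct an equivalence $\mathsf{adj}\:\mathsf{id}_A \simeq \prod_{x:A}(\mathsf{refl}_x = \mathsf{refl}_x)$, so I would first specialize to $f \equiv \mathsf{id}_A : A \to A$. Then $gf \equiv g \equiv fg$, and unfolding the definition gives
\[
    \mathsf{adj}\:\mathsf{id}_A \equiv \sum_{g : A \to A}\ \sum_{\eta : g \sim \mathsf{id}_A}\ \sum_{\varepsilon : g \sim \mathsf{id}_A}\ (\mathsf{id}_A[\eta] \sim \varepsilon_{\mathsf{id}_A}) \times (\eta_g \sim g[\varepsilon]).
\]
The goal is to collapse the data $(g,\eta,\varepsilon)$ together with the first coherence, leaving only the second coherence, which will become the desired iterated identity type.

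The key move is to contract $(g,\eta)$ before touching the coherences. I would reassociate the $\Sigma$-type so that $(g,\eta)$ is grouped as a single variable $u : \sum_{g:A\to A} g \sim \mathsf{id}_A$, exactly as in the third line of the proof of Theorem \ref{T:qinvpi}. By Lemma \ref{T:isContrSigma} this type is contractible with center $(\mathsf{id}_A,\mathsf{refl})$, so contracting it substitutes $g :\equiv \mathsf{id}_A$ and $\eta :\equiv \mathsf{refl}$ throughout the remaining type. Under this substitution the whiskerings simplify: $\mathsf{id}_A[\mathsf{refl}] = \mathsf{refl}$ and $\varepsilon_{\mathsf{id}_A} = \varepsilon$ make the first coherence into $\mathsf{refl} \sim \varepsilon$, while $\eta_g = \mathsf{refl}$ and $g[\varepsilon] = \mathsf{id}_A[\varepsilon] = \varepsilon$ (the last step using $\mathsf{ap}_{\mathsf{id}}(p)=p$) make the second coherence into $\mathsf{refl} \sim \varepsilon$ as well. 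The remaining type is thus
\[
    \sum_{\varepsilon : \mathsf{id}_A \sim \mathsf{id}_A} (\mathsf{refl} \sim \varepsilon) \times (\mathsf{refl} \sim \varepsilon).
\]
Now I would group the pair $(\varepsilon, \mathsf{coh}_1)$ into a single variable ranging over $\sum_{\varepsilon}\mathsf{refl}\sim\varepsilon$, which is again contractible with center $(\mathsf{refl},\mathsf{refl})$ by Lemma \ref{T:isContrSigma} applied one level up (to the homotopies $\mathsf{id}_A \sim \mathsf{id}_A$, regarded as dependent functions $\prod_x x = x$). Contracting it sets $\varepsilon :\equiv \mathsf{refl}$ and discards the first coherence, leaving precisely the second coherence of type $\mathsf{refl} \sim \mathsf{refl}$, that is $\prod_{x:A}(\mathsf{refl}_x = \mathsf{refl}_x)$, as required.

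The routine-but-delicate part of this argument is the bookkeeping around the four whiskering reductions $\mathsf{id}_A[\mathsf{refl}]=\mathsf{refl}$, $\varepsilon_{\mathsf{id}_A}=\varepsilon$, $\eta_g=\mathsf{refl}$, and $g[\varepsilon]=\varepsilon$, together with verifying that after the first contraction both coherences genuinely land in the same type $\mathsf{refl} \sim \varepsilon$, so that one of them can be contracted away while the other survives intact. I expect the main obstacle to be making this precise in the mechanization: the substitution $(g,\eta):\equiv(\mathsf{id}_A,\mathsf{refl})$ is realized by transport along the contraction, and one must check that the type of the second coherence reduces through $\mathsf{id}_A[\varepsilon]=\varepsilon$ with the expected coherence, and that Lemma \ref{T:isContrSigma} is available in the dependent-function form needed for the second contraction. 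As in Theorem \ref{T:qinvpi}, packaging function extensionality together with the collapse of a contractible summand into the single statement of Lemma \ref{T:isContrSigma} is what keeps both the informal proof and its formalization short and modular.
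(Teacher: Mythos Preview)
Your proposal is correct and follows essentially the same route as the paper: reduce to $\mathsf{id}_A$ by equivalence induction, contract $(g,\eta)$ via Lemma~\ref{T:isContrSigma}, then contract $(\varepsilon,\tau)$ via the same lemma to leave only the second coherence. The one cosmetic difference is that the paper leaves the second coherence as $\mathsf{refl}\sim\mathsf{id}_A[\varepsilon]$ and lets $\mathsf{id}_A[\mathsf{refl}]\equiv\mathsf{refl}$ hold judgmentally after the second contraction, whereas you invoke the propositional identity $\mathsf{ap}_{\mathsf{id}}(p)=p$ beforehand; the paper's ordering sidesteps that extra step.
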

	\begin{proof}
		By Equivalence Induction \ref{T:equivInduction}, it suffices to show $\mathsf{adj} \: \mathsf{id}_A 
			\simeq \prod\limits_{x : A} \mathsf{refl}_x = \mathsf{refl}_x$. Observe that
		\begin{align}
			\mathsf{adj} \: \mathsf{id}_A &\equiv \sum_{g : A \to A} \ 
				\sum_{\eta : g \sim \mathsf{id}_A} \ 
				\sum_{\varepsilon : g \sim \mathsf{id}_A}
				\mathsf{id}_A[\eta] \sim \varepsilon \times \eta_g \sim g[\varepsilon] \nonumber \\
			\label{eq:adjprev} &\simeq \sum_{\varepsilon : \mathsf{id}_A \sim \mathsf{id}_A}
				\mathsf{refl} \sim \varepsilon \times \mathsf{refl} \sim \mathsf{id}_A[\varepsilon] \\
			&\simeq \sum_{\varepsilon : \mathsf{id}_A \sim \mathsf{id}_A} \
				\sum_{\tau : \mathsf{refl} \sim \varepsilon} 
				\mathsf{refl} \sim \mathsf{id}_A[\varepsilon] \nonumber \\
			&\simeq \sum_{u : \sum\limits_{\varepsilon : \mathsf{id}_A \sim \mathsf{id}_A} 
					\mathsf{refl} \sim \varepsilon}
				\mathsf{refl} \sim \mathsf{id}_A[\mathsf{pr}_1 \: u] \nonumber \\
			\label{eq:adjline} &\simeq \mathsf{refl} \sim \mathsf{id}_A [\mathsf{refl}] \\
			&\equiv \prod_{x : A} (\mathsf{refl}_x = \mathsf{refl}_x). \nonumber
		\end{align}
		The equivalence (\ref*{eq:adjprev}) comes from the equivalence in Theorem \ref{T:qinvpi}, where the pair
			$(g, \eta)$ contracts to $(\mathsf{id}_A, \mathsf{refl})$. The equivalence (\ref*{eq:adjline})
			follows from Lemma \ref{T:isContrSigma}.
	\end{proof}
	This result implies that any type with non-trivial $\pi_2$ may be used to construct
		non-trivial inhabitants of this type. In particular, $\pi_2(S^2) = \mathbb{Z}$ proves the following:
	\begin{cor}
		The type $\mathsf{adj} \: \mathsf{id}_{S^2}$ is not a proposition. \qed
	\end{cor}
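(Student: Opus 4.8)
The plan is to combine Theorem \ref{T:adjequiv} with the computation $\pi_2(S^2) = \mathbb{Z}$. First I would check that Theorem \ref{T:adjequiv} applies, i.e. that $\mathsf{ishadj}\:\mathsf{id}_{S^2}$ is inhabited: taking $g :\equiv \mathsf{id}_{S^2}$ and $\eta :\equiv \varepsilon :\equiv \mathsf{refl}$, the single coherence required of a half-adjoint equivalence holds by reflexivity. The theorem then yields
\[
	\mathsf{adj}\:\mathsf{id}_{S^2} \simeq \prod_{x : S^2}(\mathsf{refl}_x = \mathsf{refl}_x).
\]
Since being a proposition is invariant under equivalence, it suffices to show that the right-hand product is not a proposition.

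Second, I would use $\pi_2(S^2) = \mathbb{Z}$ to produce two provably distinct elements of this product. Fixing a basepoint $b : S^2$, the fiber of the family at $b$ is the double loop type $\mathsf{refl}_b = \mathsf{refl}_b$, whose set-truncation is $\pi_2(S^2) = \mathbb{Z}$; in particular it contains an element $\alpha$ that is not equal to $\mathsf{refl}_{\mathsf{refl}_b}$, since the two have distinct images ($1$ and $0$) in $\mathbb{Z}$. I would then extend $\alpha$ to a global section $\phi_1 : \prod_{x : S^2}(\mathsf{refl}_x = \mathsf{refl}_x)$ with $\phi_1(b) = \alpha$, and compare it with the canonical section $\phi_0 :\equiv \lambda x.\,\mathsf{refl}_{\mathsf{refl}_x}$. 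If the product were a proposition we would have $\phi_0 = \phi_1$, whence evaluating at $b$ would give $\mathsf{refl}_{\mathsf{refl}_b} = \alpha$, contradicting the choice of $\alpha$. Equivalently, evaluation at $b$ exhibits $\mathsf{refl}_b = \mathsf{refl}_b$ as a retract of the product; since a retract of a proposition is a proposition while $\mathsf{refl}_b = \mathsf{refl}_b$ is not (its truncation being $\mathbb{Z}$), the product cannot be a proposition.

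The main obstacle is the construction of the extension $\phi_1$, equivalently the section witnessing the retract: producing a global element of the family over all of $S^2$ agreeing with $\alpha$ at $b$ is precisely an application of the recursion/induction principle for $S^2$, which forces one to discharge the coherence attached to the generating $2$-cell. Because the current HoTT in Lean 3 library does not provide this induction principle, this step cannot be carried out in the formalization, which is exactly why the corollary is asserted on semantic grounds rather than formally verified. Everything else—the inhabitation of $\mathsf{ishadj}\:\mathsf{id}_{S^2}$, the application of Theorem \ref{T:adjequiv}, the invariance of propositionhood under equivalences and retracts, and the failure of $\mathbb{Z}$ to be a proposition—is elementary and formalizable, so the sole genuine input from algebraic topology is the identification $\pi_2(S^2) = \mathbb{Z}$.
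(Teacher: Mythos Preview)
Your proposal is correct and follows essentially the same approach as the paper: apply Theorem~\ref{T:adjequiv} and invoke $\pi_2(S^2)=\mathbb{Z}$. The paper's justification is a single sentence preceding the corollary, leaving entirely implicit the passage from nontriviality of $\pi_2(S^2)$ to nontriviality of $\prod_{x:S^2}(\mathsf{refl}_x=\mathsf{refl}_x)$; you have unpacked this step carefully and correctly flagged that producing the global section (equivalently, the retraction onto $\Omega^2 S^2$) requires $S^2$-induction and an Eckmann--Hilton-style coherence, which is precisely the part that is not formalized.
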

	This is a solution to Exercise 4.1 in \cite{HoTT:book}.  
		As before, this proof uses Lemma \ref{T:isContrSigma} to contract the pairs 
		$(g, \eta)$ and $(\varepsilon, \tau)$ so that a single homotopy remains. 
		Trying to apply path induction directly requires an equivalence which writes each homotopy as an equality; 
		a formal proof using function extensionality for such an equivalence 
		along with path induction reaches 60 lines of code (varying by format, syntax choice, etc.). 
		By modularizing the case of $\mathsf{qinv}$, this proof is reduced to manipulating $\Sigma$-types and
		applying Lemma \ref{T:isContrSigma} twice, with the formal proof in the library being 23 lines of code.

	\section{2-adjoint equivalences}
	As in the case of $\mathsf{qinv}$, we expect there is an additional coherence that may be
		appended to the type $\mathsf{adj}\: f$ to create a proposition. 
		To define this coherence, we use the following homotopy:
	\begin{lem}[\texttt{two\_adj/nat\_coh}] \label{T:nat_coh}
		Given $f : A \to B$ and $g : B \to A$ with a homotopy $H : gf \sim \mathsf{id}_A$,
			we have a homotopy
		\[
			\mathsf{Coh} \: H : H_{gf} \sim g[f[H]]
		\]
		such that
		\[
			\mathsf{Coh} \: \mathsf{refl} \equiv \mathsf{refl}_{\mathsf{refl}} : \mathsf{refl} \sim \mathsf{refl}.
		\]
	\end{lem}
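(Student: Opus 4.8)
The plan is to first unfold both sides of the claimed homotopy. Since $H : gf \sim \mathsf{id}_A$, both $H_{gf}$ and $g[f[H]]$ are homotopies $gfgf \sim gf$, so it suffices to produce, for each $x : A$, a path $H_{gf(x)} = g[f[H_x]]$ in the type $gf(gf(x)) = gf(x)$, sufficiently naturally in $H$ that it collapses on the constant homotopy. Writing $\varphi :\equiv gf : A \to A$ and using that $g[f[H_x]] \equiv (gf)[H_x] = \varphi[H_x]$ by the computation of $\mathsf{ap}$ on a composite, the statement becomes the self-naturality identity $H_{\varphi(x)} = \varphi[H_x]$ for a homotopy $H : \varphi \sim \mathsf{id}_A$ from an endofunction to the identity.

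To construct this pointwise path I would invoke naturality of $H$: for any path $p : a = b$ one has $H_a \cdot \mathsf{id}_A[p] = \varphi[p] \cdot H_b$ (\cite{HoTT:book}), which is proved by path induction on $p$. Instantiating $a :\equiv \varphi(x)$, $b :\equiv x$, and $p :\equiv H_x : \varphi(x) = x$, and simplifying $\mathsf{id}_A[H_x] \equiv H_x$, yields $H_{\varphi(x)} \cdot H_x = \varphi[H_x] \cdot H_x$. Right-cancelling the common factor $H_x$ then delivers the desired $H_{\varphi(x)} = \varphi[H_x]$, and abstracting over $x$ gives the homotopy $\mathsf{Coh} \: H$. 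This is exactly the classical corollary of naturality, so the existence of $\mathsf{Coh} \: H$ presents no real difficulty.

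The main obstacle is instead the requirement that $\mathsf{Coh} \: H$ satisfy the \emph{judgmental} computation rule $\mathsf{Coh} \: \mathsf{refl} \equiv \mathsf{refl}_{\mathsf{refl}}$. When $H$ is the constant homotopy each $H_x$ is $\mathsf{refl}_x$, so both $H_{\varphi(x)}$ and $\varphi[H_x]$ reduce to $\mathsf{refl}_{\varphi(x)}$ and the target path is $\mathsf{refl}_{\mathsf{refl}_{\varphi(x)}}$; the task is to guarantee that the \emph{witness} produced above reduces to this on the nose. This forces care in how the proof is assembled: the right-cancellation step and the unit-law rewrites used to simplify the naturality square must be arranged so that, on the constant homotopy, they all degenerate definitionally rather than merely propositionally. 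Concretely, I would carry out the pointwise argument by a single path induction routing $H_x$ through the naturality square, so that its $\mathsf{refl}$-branch is literally $\mathsf{refl}$, and I would deliberately avoid any passage through function extensionality, whose round-trip would yield the coherence only up to a propositional identity and thereby destroy the definitional behaviour needed downstream (in particular for $\mathsf{Coh} \: \eta$ of Definition \ref{D:two_hae}).
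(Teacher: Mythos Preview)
Your approach is essentially the paper's: reduce to the self-naturality identity $H_{gf(x)} = (gf)[H_x]$ (the paper cites ``naturality'', i.e.\ the HoTT book's Corollary 2.4.4, which is exactly your cancellation argument spelled out) and then pass to $g[f[H_x]]$. The only slip is that you write $g[f[H_x]] \equiv (gf)[H_x]$ as a judgmental equality; in standard HoTT this is the propositional functoriality lemma $\mathsf{ap}_{g \circ f}(p) = \mathsf{ap}_g(\mathsf{ap}_f(p))$, proved by path induction on $p$ --- which is how the paper phrases its second step --- but since that lemma computes to $\mathsf{refl}$ on $\mathsf{refl}$ your analysis of the $\mathsf{Coh}\,\mathsf{refl} \equiv \mathsf{refl}_{\mathsf{refl}}$ clause is unaffected.
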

	\begin{proof}
		Fix $x : A$. We have
		\[ 
			\begin{split}
				H_{g(fx)} & = (gf)[H_x] \\
				& = g[f[H_x]],
			\end{split}
		\]
		where the first equality holds by naturality and the second holds by functoriality of $g[-]$.
	\end{proof}
	With this, we define the type of half 2-adjoint equivalences.
	\begin{defi}[\texttt{two\_adj/is\_two\_hae}] \label{D:two_hae}
		A function $f : A \to B$ is a \emph{half 2-adjoint equivalence} 
			if the following type is inhabited:
		\[
			\mathsf{ish2adj}\: f :\equiv \sum_{g : B \to A} \  \sum_{\eta : gf \sim \mathsf{id}_A} \ 
				\sum_{\varepsilon : fg \sim \mathsf{id}_B} \ 
				\sum_{\tau :f[\eta] \sim \varepsilon_{f}} \ 
				\sum_{\theta : \eta_{g} \sim g[\varepsilon]} \ 
				\mathsf{Coh} \: \eta \cdot g \llbracket \tau \rrbracket \sim \theta_f.
		\]
	\end{defi}
	In parallel with adjoint equivalences, we give a definition which uses an alternate coherence.
	\begin{defi}[\texttt{two\_adj/is\_two\_hae\_l}]
		A function $f : A \to B$ is a \emph{left half 2-adjoint equivalence} 
			if the following type is inhabited:
		\[
		\mathsf{ish2adjl}\: f :\equiv \sum_{g : B \to A} \  \sum_{\eta : gf \sim \mathsf{id}_A} \ 
				\sum_{\varepsilon : fg \sim \mathsf{id}_B}  \ 
				\sum_{\tau : f[\eta] \sim \varepsilon_{f}} \ 
				\sum_{\theta : \eta_{g} \sim g[\varepsilon]} \ 
				\tau_{g} \cdot \mathsf{Coh} \: \varepsilon \sim f \llbracket \theta \rrbracket.
		\]
	\end{defi}
	To show the type of half 2-adjoint equivalences is a proposition, we prove the following lemma:
	\begin{lem}[\texttt{two\_adj/r2coh\_equiv\_fib\_eq}] \label{T:tofiber}
		Given $f : A \to B$ with $(g, \eta, \varepsilon, \theta) : \mathsf{ishadjl} \: f$, we have
		\[
			\sum_{\tau : f[\eta] \sim \varepsilon_{f}} 
				\mathsf{Coh} \: \eta \cdot g\llbracket \tau \rrbracket \sim \theta_f \\
				\simeq \prod_{x : A} \left( f[\eta_x], \mathsf{Coh} \: \eta_x \cdot \theta_{fx} \right)
				= \left( \varepsilon_{fx}, \mathsf{refl}_{g[\varepsilon_{fx}]} \right),
		\]
		where $\left( f[\eta_x], \mathsf{Coh} \: \eta_x \cdot \theta_{fx} \right), 
			\left( \varepsilon_{fx}, \mathsf{refl}_{g[\varepsilon_{fx}]} \right) : \mathsf{fib}_{g[-]}
			\: g[\varepsilon_{fx}]$.
	\end{lem}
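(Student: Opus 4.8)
The plan is to establish the equivalence by transforming the right-hand side into the left-hand side, working pointwise and then reassembling with function extensionality. First I would fix $x : A$ and apply Lemma~\ref{T:fibeqchar} to the map $g[-]$, whose action on $2$-dimensional paths is $g\llbracket - \rrbracket$. This rewrites the fiber equality
\[
	\left(f[\eta_x], \mathsf{Coh}\:\eta_x \cdot \theta_{fx}\right) = \left(\varepsilon_{fx}, \mathsf{refl}_{g[\varepsilon_{fx}]}\right)
\]
as $\sum_{\gamma : f[\eta_x] = \varepsilon_{fx}} \left(\mathsf{Coh}\:\eta_x \cdot \theta_{fx} = g\llbracket\gamma\rrbracket \cdot \mathsf{refl}_{g[\varepsilon_{fx}]}\right)$; the right-unit law then cancels the trailing $\mathsf{refl}$, leaving $\sum_{\gamma} \left(\mathsf{Coh}\:\eta_x \cdot \theta_{fx} = g\llbracket\gamma\rrbracket\right)$.

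Next I would take the product over $x : A$ and distribute it over the sum --- the type-theoretic axiom of choice, an instance of function extensionality. The section extracted this way is exactly a homotopy $\tau : f[\eta] \sim \varepsilon_f$, since $\prod_{x:A}\left(f[\eta_x] = \varepsilon_{fx}\right)$ is definitionally this homotopy type and $g\llbracket\gamma\rrbracket$ becomes the pointwise value $g\llbracket\tau_x\rrbracket = (g\llbracket\tau\rrbracket)_x$. This produces
\[
	\sum_{\tau : f[\eta] \sim \varepsilon_f} \prod_{x:A}\left(\mathsf{Coh}\:\eta_x \cdot \theta_{fx} = g\llbracket\tau_x\rrbracket\right),
\]
whose index type already agrees with the target $\sum_{\tau : f[\eta]\sim\varepsilon_f}\left(\mathsf{Coh}\:\eta \cdot g\llbracket\tau\rrbracket \sim \theta_f\right)$.

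It then remains to identify, fiberwise in $\tau$, the leftover coherence $\prod_{x:A}\left(\mathsf{Coh}\:\eta_x \cdot \theta_{fx} = g\llbracket\tau_x\rrbracket\right)$ with the pointwise form of $\mathsf{Coh}\:\eta \cdot g\llbracket\tau\rrbracket \sim \theta_f$, i.e.\ with $\prod_{x:A}\left(\mathsf{Coh}\:\eta_x \cdot g\llbracket\tau_x\rrbracket = \theta_{fx}\right)$. Each pointwise equation is obtained from the other by moving $\mathsf{Coh}\:\eta_x$ across the equality and applying symmetry, which is an equivalence of the corresponding path types; assembling these over $x$ by function extensionality finishes the proof.

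The hard part will be this last matching step. Although it is conceptually routine, the bookkeeping --- tracking on which side $\mathsf{Coh}\:\eta$ sits, handling the unit-law and associativity rearrangements, and confirming that $\mathsf{ap}$ for $g[-]$ is indeed $g\llbracket - \rrbracket$ --- is precisely the kind of coherence juggling that is delicate to carry out and tends to dominate the formalization.
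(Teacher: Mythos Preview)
Your proposal is correct and follows essentially the same route as the paper's proof, just run in the opposite direction: the paper starts from the $\Sigma$-side, applies the type-theoretic axiom of choice, rearranges the equality, and then invokes Lemma~\ref{T:fibeqchar}, whereas you begin from the fiber-equality side and undo these steps. The ingredients---Lemma~\ref{T:fibeqchar}, the type-theoretic axiom of choice, and the cancellation/rearrangement of paths---are identical, and your assessment that the coherence bookkeeping is the only genuinely fiddly part matches the paper's treatment.
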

	\begin{proof}
		We have
		\begin{align}
			\sum_{\tau : f[\eta] \sim \varepsilon_f} \mathsf{Coh} \: \eta \cdot g\llbracket \tau \rrbracket
				\sim \theta_f &\equiv 
			\sum_{\tau : \prod_{x : A} f[\eta_x] = \varepsilon_{fx}}  \ 
				\prod_{x : A} \mathsf{Coh} \: \eta_x \cdot g \llbracket \tau_x \rrbracket = \theta_{fx}, 
				\nonumber \\
			\label{eq:fibone} &\simeq \prod_{x : A} \ \sum_{\tau' : f[\eta_x] = \varepsilon_{fx}} 
				\mathsf{Coh} \: \eta_x \cdot g \llbracket \tau' \rrbracket = \theta_{fx} \\
			\label{eq:fibtwo} &\simeq \prod_{x : A} \  \sum_{\tau' : f[\eta_x] = \varepsilon_{f(x)}}
				\mathsf{Coh} \: \eta_x^{-1} \cdot \theta_{fx} = g \llbracket \tau' \rrbracket \\
			\label{eq:fibthree} &\simeq \prod_{x : A} \left( f[\eta_x], (N_\eta)^{-1} \cdot \theta_{f(x)} \right)
				= \left( \varepsilon_{f(x)}, \mathsf{refl}_{g[\varepsilon_{f(x)}]} \right).
		\end{align}
		The equivalence (\ref*{eq:fibone}) holds by the Type-Theoretic Axiom of Choice, 
			(\ref*{eq:fibtwo}) is a rearrangment of equality, and 
			(\ref*{eq:fibthree}) holds by Lemma \ref{T:fibeqchar}.
	\end{proof}
	\begin{lem}[\texttt{two\_adj/is\_contr\_r2coh}] \label{T:iscontrr2coh}
		Given $f : A \to B$ with $(g, \eta, \varepsilon, \theta) : \mathsf{ishadj} \: f$, the type
		\[
			\sum_{\tau : f[\eta] \sim \varepsilon_{f}}
				\mathsf{Coh} \: \eta \cdot g \llbracket \tau \rrbracket \sim \theta_f
		\]
		is contractible.
	\end{lem}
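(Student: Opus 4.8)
The plan is to reduce the claim to the contractibility of a fiber of $\mathsf{ap}_g$ and then invoke Lemma~\ref{T:fibcontr}. First I would apply Lemma~\ref{T:tofiber} to rewrite the type in question, up to equivalence, as the dependent product
\[
	\prod_{x : A} \left( f[\eta_x], \mathsf{Coh}\:\eta_x \cdot \theta_{fx} \right) = \left( \varepsilon_{fx}, \mathsf{refl}_{g[\varepsilon_{fx}]} \right),
\]
whose $x$-th component is a path type in the fiber $\mathsf{fib}_{g[-]}\: g[\varepsilon_{fx}]$. Since contractibility is invariant under equivalence, and a dependent product of contractible types is contractible (by function extensionality), it suffices to prove that each of these path types is contractible.

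To this end I would note that the first coordinates of the two endpoints, namely $f[\eta_x]$ and $\varepsilon_{fx}$, are paths $fgfx = fx$, so the fibering map $g[-]$ is exactly $\mathsf{ap}_g \colon (fgfx = fx) \to (gfgfx = gfx)$. A path type between two fixed points of a contractible type is again contractible, so it is enough to show that the ambient type $\mathsf{fib}_{g[-]}\: g[\varepsilon_{fx}]$ is contractible. For that I would apply Lemma~\ref{T:fibcontr} to the map $g[-]$: the homotopies $\eta \colon gf \sim \mathsf{id}_A$ and $\varepsilon \colon fg \sim \mathsf{id}_B$ exhibit $f$ as a quasi-inverse of $g$, so $g$ is an equivalence; consequently $\mathsf{ap}_g = g[-]$ is an equivalence and can be promoted to a half-adjoint equivalence. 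Lemma~\ref{T:fibcontr} then yields that $\mathsf{fib}_{g[-]}\: g[\varepsilon_{fx}]$ is contractible, and hence so is each path type and the product over $x$.

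The argument is essentially an assembly of earlier results, so I do not expect a serious obstacle; the one point demanding care is identifying the fibering map arising in Lemma~\ref{T:tofiber} with $\mathsf{ap}_g$ on the correct path spaces, so that the principle ``an equivalence induces an equivalence on path spaces'' applies. Once the fiber is seen to be contractible via Lemma~\ref{T:fibcontr}, contractibility of the displayed path type --- and, after taking the dependent product, of the original type --- follows with no further computation.
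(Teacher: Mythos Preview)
Your proposal is correct and follows essentially the same route as the paper: apply Lemma~\ref{T:tofiber}, reduce via contractibility of $\Pi$-types to a path type in $\mathsf{fib}_{g[-]}\: g[\varepsilon_{fx}]$, observe that $g$ (hence $g[-]$) is an equivalence, and invoke Lemma~\ref{T:fibcontr} to conclude the fiber---and thus its path type---is contractible. The only difference is that you spell out in more detail why $g$ and $\mathsf{ap}_g$ are equivalences, which the paper leaves implicit.
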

	\begin{proof}
		By Lemma \ref{T:tofiber} and contractibility of $\Pi$-types, it suffices to fix $x : A$ and show the type
		\[
			\left( f[\eta_x], \mathsf{Coh} \: \eta_x^{-1} \cdot \theta_{fx} \right) 
				= \left( \varepsilon_{fx}, \mathsf{refl}_{g[\varepsilon_{fx}]} \right)
		\]
		is contractible. Since $g$ is an equivalence, $g[-]$ is also an equivalence.
		 	By Lemma \ref{T:fibcontr}, the type $\mathsf{fib}_{g[-]}(g[\varepsilon_{fx}])$ 
		 	is contractible, so its equality type is also contractible.
	\end{proof}
	\begin{thm}[\texttt{two\_adj/is\_prop\_is\_two\_hae}] \label{T:isprop2adj}
		For any $f : A \to B$, the type $\mathsf{ish2adj}\: f$ is a proposition.
	\end{thm}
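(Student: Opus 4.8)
The plan is to produce an equivalence $\mathsf{ish2adj}\: f \simeq \mathsf{ishadjl}\: f$ and then transport the fact that $\mathsf{ishadjl}\: f$ is a proposition—supplied by the cited \cite[Lem.~4.2.2, Thms.~4.2.3,~4.2.13]{HoTT:book} at the start of the preliminaries, which asserts that both $\mathsf{ishadj}\: f$ and $\mathsf{ishadjl}\: f$ are propositions—across this equivalence. Since being a proposition is preserved under equivalence, this would finish the proof.

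First I would re-associate and re-order the iterated $\Sigma$-type defining $\mathsf{ish2adj}\: f$. The two components $\tau : f[\eta] \sim \varepsilon_f$ and $\theta : \eta_g \sim g[\varepsilon]$ each depend only on $(g, \eta, \varepsilon)$, and neither depends on the other, so by commutativity of $\Sigma$ over independent summands I may interchange them:
\[
	\mathsf{ish2adj}\: f \simeq \sum_{g} \sum_{\eta} \sum_{\varepsilon} \
		\sum_{\theta : \eta_g \sim g[\varepsilon]} \
		\sum_{\tau : f[\eta] \sim \varepsilon_f} \mathsf{Coh}\:\eta \cdot g\llbracket\tau\rrbracket \sim \theta_f.
\]
Regrouping, the first four components $(g, \eta, \varepsilon, \theta)$ are precisely the data of a left half-adjoint equivalence, so
\[
	\mathsf{ish2adj}\: f \simeq \sum_{(g, \eta, \varepsilon, \theta) : \mathsf{ishadjl}\: f} \
		\sum_{\tau : f[\eta] \sim \varepsilon_f} \mathsf{Coh}\:\eta \cdot g\llbracket\tau\rrbracket \sim \theta_f.
\]

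Next I would invoke Lemma \ref{T:iscontrr2coh}, which states exactly that for each $(g, \eta, \varepsilon, \theta) : \mathsf{ishadjl}\: f$ the inner fiber $\sum_{\tau : f[\eta] \sim \varepsilon_f} \mathsf{Coh}\:\eta \cdot g\llbracket\tau\rrbracket \sim \theta_f$ is contractible. Since a $\Sigma$-type all of whose fibers are contractible is equivalent to its base via the first projection, this collapses the display above to $\mathsf{ish2adj}\: f \simeq \mathsf{ishadjl}\: f$. As $\mathsf{ishadjl}\: f$ is a proposition, so is $\mathsf{ish2adj}\: f$.

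The main obstacle is the bookkeeping in the first step: I must check that interchanging $\tau$ and $\theta$ is well-typed (both are parametrized only by $g, \eta, \varepsilon$, so the swap is a genuine instance of $\Sigma$-commutativity and not a dependency violation), and that after regrouping the leading four components match the definition of $\mathsf{ishadjl}\: f$ on the nose so that the residual type is verbatim the one handled by Lemma \ref{T:iscontrr2coh}. Once these associativity and commutativity rearrangements are aligned, the contractibility input and the stability of propositions under equivalence are routine.
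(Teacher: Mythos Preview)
Your proof is correct and follows essentially the same decomposition as the paper: both reorder $\tau$ and $\theta$, regroup the first four components as $\mathsf{ishadjl}\: f$, and invoke Lemma~\ref{T:iscontrr2coh} for the remaining fiber. The only cosmetic difference is that the paper first assumes an inhabitant of $\mathsf{ish2adj}\: f$, contracts the (now inhabited, hence contractible) base $\mathsf{ishadjl}\: f$ to its center, and then applies the lemma once, whereas you apply the lemma fiberwise to obtain the unconditional equivalence $\mathsf{ish2adj}\: f \simeq \mathsf{ishadjl}\: f$ and transport propositionality---your packaging is arguably a touch cleaner, but the content is identical.
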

	\begin{proof}
		It suffices to assume $e : \mathsf{ish2adj}\: f$ and show this type is contractible.
			Observe that
		\[ \begin{split}
			\mathsf{ish2adj}\: f &\equiv \sum_{g : B \to A} \  \sum_{\eta : gf \sim \mathsf{id}_A} \ 
				\sum_{\varepsilon : fg \sim \mathsf{id}_B} \ 
				\sum_{\tau : f[\eta] \sim \varepsilon_{f}} \ 
				\sum_{\theta : \eta_{g} \sim g[\varepsilon]} 
				\mathsf{Coh} \: \eta \cdot g \llbracket \tau \rrbracket \sim \theta_{f} \\
			&\simeq \sum_{g : B \to A} \  \sum_{\eta : gf \sim \mathsf{id}_A} \ 
				\sum_{\varepsilon : fg \sim \mathsf{id}_B} \ 
				\sum_{\theta : \eta_{g} \sim g[\varepsilon]} \ 
				\sum_{\tau : f[\eta] \sim \varepsilon_{f}} 
				\mathsf{Coh} \: \eta \cdot g \llbracket \tau \rrbracket \sim \theta_{f} \\
			&\simeq
				\sum_{(g, \eta, \varepsilon, \theta) : \mathsf{ishadjl}\: f} \ 
				\sum_{\tau : f[\eta] \sim \varepsilon_{f}}
				\mathsf{Coh} \: \eta \cdot g \llbracket \tau \rrbracket \sim \theta_{f} \\
			&\simeq 
				\sum_{\tau : f[\eta_0] \sim (\varepsilon_0)_f}
				\mathsf{Coh} \: \eta_0 \cdot g_0 \llbracket \tau \rrbracket \sim (\theta_0)_f.
		\end{split} \]
		The last equivalence holds since $\mathsf{ishadjl} f$ is contractible (it is a proposition and inhabited by
			$e$ after discarding coherences); we write $(g_0, \eta_0, \varepsilon_0, \theta_0) : \mathsf{ishadjl} f$
			for its center of contraction. This final type is contractible by Lemma \ref{T:iscontrr2coh},
			therefore $\mathsf{ish2adj} f$ is contractible.
	\end{proof}
	Parallels of these proofs are used to obtain similar results about left half two-adjoint equivalences as well.
	\begin{lem}[\texttt{two\_adj/is\_contr\_l2coh}] \label{T:iscontrl2coh}
		Given $f : A \to B$ with $(g, \eta, \varepsilon, \tau) : \mathsf{ishadj} f$, the type
		\[
			\sum_{\theta : \eta_{g} \sim g[\varepsilon]}
				\tau_{g} \cdot \mathsf{Coh} \: \varepsilon \sim f \llbracket \theta \rrbracket.
		\]
		is contractible.
	\end{lem}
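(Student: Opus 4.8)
The plan is to mirror the proof of Lemma \ref{T:iscontrr2coh}, interchanging the roles of $(f, \eta)$ and $(g, \varepsilon)$. First I would establish the left-handed analog of Lemma \ref{T:tofiber}: an equivalence
\[
\sum_{\theta : \eta_g \sim g[\varepsilon]} \tau_g \cdot \mathsf{Coh}\:\varepsilon \sim f\llbracket\theta\rrbracket \ \simeq \ \prod_{y : B} \left( \eta_{gy},\ \tau_{gy}\cdot\mathsf{Coh}\:\varepsilon_y \right) = \left( g[\varepsilon_y],\ \mathsf{refl}_{f[g[\varepsilon_y]]} \right),
\]
where both pairs are read as elements of $\mathsf{fib}_{f[-]}\:f[g[\varepsilon_y]]$, the fiber of $f[-] : (gfgy = gy) \to (fgfgy = fgy)$.

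To produce this equivalence I would carry out the same three steps as in Lemma \ref{T:tofiber}. Unfolding $\sim$ as a $\Pi$-type and applying the Type-Theoretic Axiom of Choice commutes the outer $\sum$ past $\prod_{y : B}$, giving $\prod_{y : B}\ \sum_{\theta' : \eta_{gy} = g[\varepsilon_y]}\ \tau_{gy}\cdot\mathsf{Coh}\:\varepsilon_y = f\llbracket\theta'\rrbracket$. Unlike the right-handed case — where $\mathsf{Coh}\:\eta$ first had to be carried across the equation — here $f\llbracket\theta'\rrbracket$ already sits isolated on the right, so no rearrangement is needed. Taking the second fiber component to be $\mathsf{refl}$, this inner type is exactly the form $p = h[\gamma]\cdot p'$ required by Lemma \ref{T:fibeqchar} for the map $h :\equiv f[-]$, for which $h[\gamma] = f\llbracket\gamma\rrbracket$; this yields the displayed fiber equality.

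With the equivalence in hand, contractibility follows as in Lemma \ref{T:iscontrr2coh}. By contractibility of $\Pi$-types it suffices to fix $y : B$ and show that the equality type of $\mathsf{fib}_{f[-]}\:f[g[\varepsilon_y]]$ is contractible. Since $(g, \eta, \varepsilon, \tau) : \mathsf{ishadj}\:f$ exhibits $f$ as an equivalence, the induced map $f[-]$ is an equivalence as well, so Lemma \ref{T:fibcontr} makes $\mathsf{fib}_{f[-]}\:f[g[\varepsilon_y]]$ contractible; its equality type is then contractible too.

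The main obstacle I anticipate is bookkeeping rather than conceptual. The delicate points are confirming that it is $f[-]$ (and not $g[-]$) that is the relevant equivalence, and checking that the endpoints $\left(\eta_{gy},\ \tau_{gy}\cdot\mathsf{Coh}\:\varepsilon_y\right)$ and $\left(g[\varepsilon_y],\ \mathsf{refl}\right)$ genuinely inhabit $\mathsf{fib}_{f[-]}\:f[g[\varepsilon_y]]$ with the coherence term $\tau_{gy}\cdot\mathsf{Coh}\:\varepsilon_y$ typed as $f[\eta_{gy}] = f[g[\varepsilon_y]]$, so that Lemma \ref{T:fibeqchar} applies without adjustment.
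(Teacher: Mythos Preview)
Your proposal is correct and is precisely the approach the paper intends: the paper's own proof is the single line ``Analogous to Lemma \ref{T:iscontrr2coh},'' and you have spelled out exactly that analogy, with the roles of $(f,\eta)$ and $(g,\varepsilon)$ swapped and the fiber taken over $f[-]$ rather than $g[-]$. Your observation that no rearrangement step is needed here (since $f\llbracket\theta'\rrbracket$ is already isolated on the right, up to cancelling the trailing $\mathsf{refl}$ from Lemma \ref{T:fibeqchar}) is a correct and pleasant simplification relative to the right-handed case.
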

	\begin{proof}
		Analogous to Lemma \ref{T:iscontrr2coh}.
	\end{proof}
	\begin{thm}[\texttt{two\_adj/is\_prop\_is\_two\_hae\_l}]
		For $f : A \to B$, the type $\mathsf{ish2adjl}\: f$ is a proposition.
	\end{thm}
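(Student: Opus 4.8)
The plan is to mirror the proof of Theorem \ref{T:isprop2adj} almost verbatim, swapping the roles of the two coherences: where that proof pivoted on $\mathsf{ishadjl}\: f$ together with Lemma \ref{T:iscontrr2coh}, this one will pivot on $\mathsf{ishadj}\: f$ together with its left analog, Lemma \ref{T:iscontrl2coh}. As before, since being a proposition is equivalent to being contractible under the assumption of an inhabitant, I would assume $e : \mathsf{ish2adjl}\: f$ and show that $\mathsf{ish2adjl}\: f$ is contractible.

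The first step is to reassociate the iterated $\Sigma$-type so that its base is an equivalence type that we already know to be a proposition. The pleasant simplification over the right-handed case is that \emph{no} reordering of summands is required: the left coherence $\tau_{g} \cdot \mathsf{Coh}\: \varepsilon \sim f\llbracket\theta\rrbracket$ mentions $\theta$ only in its innermost position, so the four outer components $(g, \eta, \varepsilon, \tau)$ are already exactly in the shape of $\mathsf{ishadj}\: f$. Thus
\[
	\mathsf{ish2adjl}\: f \simeq \sum_{(g, \eta, \varepsilon, \tau) : \mathsf{ishadj}\: f} \ \sum_{\theta : \eta_{g} \sim g[\varepsilon]} \tau_{g} \cdot \mathsf{Coh}\: \varepsilon \sim f\llbracket\theta\rrbracket.
\]
Next, since $\mathsf{ishadj}\: f$ is a proposition (by the cited characterization of equivalence notions) and is inhabited by $e$ after discarding its two coherences, it is contractible; write $(g_0, \eta_0, \varepsilon_0, \tau_0)$ for its center. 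Contracting the base identifies the total space with the single fiber $\sum_{\theta} (\tau_0)_{g_0} \cdot \mathsf{Coh}\: \varepsilon_0 \sim f\llbracket\theta\rrbracket$, which is contractible by Lemma \ref{T:iscontrl2coh}. Hence $\mathsf{ish2adjl}\: f$ is contractible, and therefore a proposition.

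Because Lemma \ref{T:iscontrl2coh} is already in hand, the theorem itself is essentially mechanical, and the only thing to get right is the bookkeeping: one must verify that it is $\mathsf{ishadj}\: f$ (not $\mathsf{ishadjl}\: f$) that appears as the contractible base, which is dictated by the fact that the left coherence is anchored at $\tau_{g}$ and $f\llbracket\theta\rrbracket$ rather than at $g\llbracket\tau\rrbracket$ and $\theta_f$. The genuine content is therefore offloaded to Lemma \ref{T:iscontrl2coh}, whose proof is declared analogous to Lemma \ref{T:iscontrr2coh}; were I to supply that content, the main obstacle would be establishing the left analog of the fiber characterization of Lemma \ref{T:tofiber} — rewriting the equality into an identification of fibers of $f[-]$ (rather than $g[-]$) so that contractibility of the fibers of an equivalence (Lemma \ref{T:fibcontr}) applies. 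This requires care with the orientation of the $\mathsf{Coh}$ homotopy and with which map's action on paths is being inverted.
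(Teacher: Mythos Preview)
Your proposal is correct and is exactly the analogy the paper intends: the paper's proof is literally ``Analogous to Theorem \ref{T:isprop2adj},'' and you have spelled that analogy out faithfully, correctly swapping $\mathsf{ishadjl}\: f$ for $\mathsf{ishadj}\: f$ and Lemma \ref{T:iscontrr2coh} for Lemma \ref{T:iscontrl2coh}. Your observation that no reordering of summands is needed in this direction (since $(g,\eta,\varepsilon,\tau)$ already appear in the order required for $\mathsf{ishadj}\: f$) is a nice clarification that the paper does not make explicit.
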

	\begin{proof}
		Analogous to Theorem \ref{T:isprop2adj}.
	\end{proof}
	As well, either half adjoint equivalence may be promoted to the alternate half 2-adjoint equivalence.
	\begin{thm}[\texttt{two\_adj/two\_adjointify}] \label{T:2promote}
		For $f : A \to B$, we have maps
		\begin{enumerate}
			\item $\mathsf{ishadjl} \: f \to \mathsf{ish2adj} \: f$
			\item $\mathsf{ishadj} \: f \to \mathsf{ish2adjl} \: f$
		\end{enumerate}
	\end{thm}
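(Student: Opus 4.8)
The plan is to build each map by retaining the given half-adjoint data unchanged and invoking the appropriate contractibility result to manufacture the two missing components, namely the second triangle coherence and the swallowtail. For part (1), suppose we are handed $(g, \eta, \varepsilon, \theta) : \mathsf{ishadjl} \: f$, so $\theta : \eta_g \sim g[\varepsilon]$. To upgrade this to an element of $\mathsf{ish2adj} \: f$ we must additionally produce a homotopy $\tau : f[\eta] \sim \varepsilon_f$ together with a witness that $\mathsf{Coh} \: \eta \cdot g\llbracket \tau \rrbracket \sim \theta_f$. But such a pair is exactly a point of
\[
	\sum_{\tau : f[\eta] \sim \varepsilon_f} \mathsf{Coh} \: \eta \cdot g\llbracket \tau \rrbracket \sim \theta_f,
\]
which is contractible by Lemma \ref{T:iscontrr2coh} applied to the datum $(g, \eta, \varepsilon, \theta)$ at hand. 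Taking its center of contraction $(\tau_0, c_0)$ and emitting $(g, \eta, \varepsilon, \tau_0, \theta, c_0)$ then defines the desired map into $\mathsf{ish2adj} \: f$.

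Part (2) is entirely dual. Given $(g, \eta, \varepsilon, \tau) : \mathsf{ishadj} \: f$ with $\tau : f[\eta] \sim \varepsilon_f$, an element of $\mathsf{ish2adjl} \: f$ demands a homotopy $\theta : \eta_g \sim g[\varepsilon]$ and a witness that $\tau_g \cdot \mathsf{Coh} \: \varepsilon \sim f\llbracket \theta \rrbracket$, i.e.\ a point of
\[
	\sum_{\theta : \eta_g \sim g[\varepsilon]} \tau_g \cdot \mathsf{Coh} \: \varepsilon \sim f\llbracket \theta \rrbracket,
\]
which is contractible by Lemma \ref{T:iscontrl2coh}. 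Its center $(\theta_0, c_0)$ supplies the output $(g, \eta, \varepsilon, \tau, \theta_0, c_0)$, completing the second map.

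The essential content has already been discharged by the contractibility lemmas, so I do not expect a genuine obstacle; the only care required is in the bookkeeping of the $\Sigma$-component order, since $\mathsf{ish2adj}$ lists $\tau$ before $\theta$ whereas the incoming $\mathsf{ishadjl}$-datum already fixes $\theta$ (and dually in part (2)). The point to verify is simply that the coherence $c_0$ returned by each lemma has precisely the type of the final component of the target, so that each map is defined merely by inserting the center of contraction into the correct slots. It is worth noting that these maps are in fact half of equivalences $\mathsf{ish2adj} \: f \simeq \mathsf{ishadjl} \: f$ and $\mathsf{ish2adjl} \: f \simeq \mathsf{ishadj} \: f$, exhibiting the promotion as inverse to the evident projection that discards the extra coherence, in line with the reasoning used in the proof of Theorem \ref{T:isprop2adj}.
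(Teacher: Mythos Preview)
Your proposal is correct and matches the paper's argument exactly: the paper's proof is the single sentence ``Take the missing coherences to be the centers of contraction from Lemmas~\ref{T:iscontrr2coh} and~\ref{T:iscontrl2coh},'' which is precisely what you spell out. Your additional remarks about component ordering and the resulting equivalences with the projection maps are accurate elaborations but not needed for the bare statement.
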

	\begin{proof}
		Take the missing coherences to be the centers of contraction 
			from Lemmas \ref{T:iscontrr2coh} and \ref{T:iscontrl2coh}.
	\end{proof} 
	This implies that an adjoint equivalence may be promoted to either half 2-adjoint equivalence.
	\begin{cor}
		For $f : A \to B$, we have maps
		\begin{enumerate}
			\item $\mathsf{adj} \: f \to \mathsf{ish2adj} \: f$
			\item $\mathsf{adj} \: f \to \mathsf{ish2adjl} \: f$
		\end{enumerate}
	\end{cor}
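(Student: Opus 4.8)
The plan is to observe that the type $\mathsf{adj}\: f$ already contains, as a subset of its data, both of the coherences that distinguish $\mathsf{ishadj}\: f$ and $\mathsf{ishadjl}\: f$. Indeed, unfolding the definitions, $\mathsf{adj}\: f$ is a $\Sigma$-type whose final component is the \emph{product}
\[
	f[\eta] \sim \varepsilon_f \times \eta_g \sim g[\varepsilon],
\]
while $\mathsf{ishadj}\: f$ carries only the first factor $f[\eta] \sim \varepsilon_f$ and $\mathsf{ishadjl}\: f$ only the second factor $\eta_g \sim g[\varepsilon]$. Hence the obvious projections discarding one factor of this product yield maps $\mathsf{adj}\: f \to \mathsf{ishadj}\: f$ and $\mathsf{adj}\: f \to \mathsf{ishadjl}\: f$, both keeping the shared data $(g, \eta, \varepsilon)$ fixed.

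With these two projections in hand, I would simply compose with the maps provided by Theorem \ref{T:2promote}. For part (1), I compose the projection $\mathsf{adj}\: f \to \mathsf{ishadjl}\: f$ with the map $\mathsf{ishadjl}\: f \to \mathsf{ish2adj}\: f$ of Theorem \ref{T:2promote}(1). For part (2), I compose the projection $\mathsf{adj}\: f \to \mathsf{ishadj}\: f$ with the map $\mathsf{ishadj}\: f \to \mathsf{ish2adjl}\: f$ of Theorem \ref{T:2promote}(2). This gives the two desired maps directly.

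There is essentially no obstacle here: the corollary is a routine composition, and the only point requiring the slightest care is matching up which coherence of $\mathsf{adj}\: f$ feeds into which half 2-adjoint notion. One might worry that the projection followed by Theorem \ref{T:2promote} reconstructs the \emph{discarded} coherence (the missing $\tau$ or $\theta$) as a center of contraction rather than preserving the one already present in $\mathsf{adj}\: f$; but since the corollary only asserts the existence of maps and places no compatibility condition on them, this is immaterial. Thus the entire proof reduces to composing the projections out of $\mathsf{adj}\: f$ with the results of Theorem \ref{T:2promote}.
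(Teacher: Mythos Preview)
Your proof is correct and matches the paper's own argument exactly: the paper's proof reads simply ``Discard either coherence and apply Theorem~\ref{T:2promote},'' which is precisely the projection-then-compose strategy you spell out. Your additional remark about the discarded coherence being rebuilt as a center of contraction is a nice clarification but, as you note, irrelevant to the bare existence claim.
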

	\begin{proof}
		Discard either coherence and apply Theorem \ref{T:2promote}.
	\end{proof}
	Finally, we have that the half 2-adjoint and left half 2-adjoint equivalences are logically equivalent.
	\begin{thm}[\texttt{two\_adj/two\_hae\_equiv\_two\_hae\_l}]
		For $f : A \to B$, we have maps
		\[
			\mathsf{ish2adj}\: f \leftrightarrow \mathsf{ish2adjl}\: f.
		\]
	\end{thm}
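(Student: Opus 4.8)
The plan is to realize each direction as a two-step composite: a projection that extracts an ordinary half adjoint equivalence, followed by one of the promotion maps of Theorem \ref{T:2promote}. The crucial observation is that both two-dimensional types already carry a full tuple $(g, \eta, \varepsilon, \tau, \theta)$ and differ only in their final coherence. In particular, from an element of $\mathsf{ish2adj}\: f$ I can read off $(g, \eta, \varepsilon, \tau)$ with $\tau : f[\eta] \sim \varepsilon_f$, which is exactly the data of $\mathsf{ishadj}\: f$; dually, from an element of $\mathsf{ish2adjl}\: f$ I can read off $(g, \eta, \varepsilon, \theta)$ with $\theta : \eta_g \sim g[\varepsilon]$, which is exactly the data of $\mathsf{ishadjl}\: f$. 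The final 2-coherence is simply discarded by these projections.

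First, for the map $\mathsf{ish2adj}\: f \to \mathsf{ish2adjl}\: f$, I would project onto $(g, \eta, \varepsilon, \tau) : \mathsf{ishadj}\: f$ and then apply Theorem \ref{T:2promote}(2), which yields an element of $\mathsf{ish2adjl}\: f$. Symmetrically, for the map $\mathsf{ish2adjl}\: f \to \mathsf{ish2adj}\: f$, I would project onto $(g, \eta, \varepsilon, \theta) : \mathsf{ishadjl}\: f$ and then apply Theorem \ref{T:2promote}(1). Each direction is thus a one-line composite of a $\Sigma$-projection with an already-established map.

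I do not expect a genuine obstacle here. Because we are only asked for a logical equivalence rather than an equivalence of types, there is no need to match the underlying data of the two constructed structures or to verify any round-trip identities; each direction simply needs to produce an inhabitant. All of the real content has already been absorbed into Theorem \ref{T:2promote}, whose promotion maps supply the missing final coherence as a center of contraction via Lemmas \ref{T:iscontrr2coh} and \ref{T:iscontrl2coh}. Should one wish to strengthen this to an equivalence $\mathsf{ish2adj}\: f \simeq \mathsf{ish2adjl}\: f$, the cleanest argument would appeal to propositionality: both types are propositions by Theorem \ref{T:isprop2adj} and its left-handed analogue, so any two maps between them automatically constitute an equivalence.
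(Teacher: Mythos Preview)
Your proposal is correct and matches the paper's own proof essentially verbatim: the paper's argument is simply ``in either direction, discard coherences and apply Theorem \ref{T:2promote},'' which is precisely the two-step composite you describe. Your additional remark about upgrading the logical equivalence to a genuine equivalence via propositionality is also correct, though the paper does not spell this out.
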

	\begin{proof}
		In either direction, discard coherences and apply Theorem \ref{T:2promote}.
	\end{proof}
	We summarize the properties of these 2-adjoint equivalances with the following
		diagram of maps:
	\[
		\begin{tikzcd}[column sep = small]
			\mathsf{ish2adjl} \: f \ar[rr, leftrightarrow, "\simeq"] \ar[rd, leftrightarrow] 
				&& \mathsf{ish2adj} \: f \ar[ld, leftrightarrow] \\
			& \mathsf{adj} \: f \ar[ld, leftrightarrow] \ar[rd, leftrightarrow] & \\
			\mathsf{ishadjl} \: f \ar[rr, leftrightarrow, "\simeq"] \ar[rd, leftrightarrow] 
				&& \mathsf{ishadj} \: f \ar[ld, leftrightarrow] \\
			& \mathsf{qinv} f &
		\end{tikzcd}
	\]
	where rows 1 and 3 are propositions.
	
	As before, appending either one of these coherences yields a proposition, but appending both coherences
		yields a non-proposition once more. 
	\begin{defi}[\texttt{two\_adj/two\_adj}]
		Given $f : A \to B$, the structure of a \emph{2-adjoint equivalence} on $f$ is the type:
		\[
			\mathsf{2adj} \: f :\equiv \sum_{g : B \to A} \  \sum_{\eta : gf \sim \mathsf{id}_A} \ 
				\sum_{\varepsilon : fg \sim \mathsf{id}_B} \ 
				\sum_{\tau : f[\eta] \sim \varepsilon_f} \ 
				\sum_{\theta : \eta_g \sim g[\varepsilon]} \
				\mathsf{Coh} \: \eta \cdot g \llbracket \tau \rrbracket \sim \theta_f \times
				\tau_g \cdot \mathsf{Coh} \: \varepsilon \sim f \llbracket \theta \rrbracket.
		\]
	\end{defi}
	\begin{thm}[\texttt{two\_adj/two\_adj\_equiv\_pi\_refl\_eq}]
		Given $f : A \to B$ such that $\mathsf{ishadj}\: f$ is inhabited, we have
		\[
			\mathsf{2adj}\: f \simeq \prod_{x : A}
				 (\mathsf{refl}_{\mathsf{refl}_x} = \mathsf{refl}_{\mathsf{refl}_x}).
		\]
	\end{thm}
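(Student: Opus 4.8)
The plan is to follow the template of Theorem \ref{T:adjequiv} (and through it Theorem \ref{T:qinvpi}), reducing to the identity map and then successively contracting contractible $\Sigma$-types. By Equivalence Induction (Lemma \ref{T:equivInduction}) it suffices to produce an equivalence $\mathsf{2adj}\:\mathsf{id}_A \simeq \prod_{x:A}(\mathsf{refl}_{\mathsf{refl}_x} = \mathsf{refl}_{\mathsf{refl}_x})$. Writing out $\mathsf{2adj}\:\mathsf{id}_A$, the underlying data $(g, \eta, \varepsilon, \tau, \theta)$ is exactly the data of $\mathsf{adj}\:\mathsf{id}_A$ together with the two extra coherences $\mathsf{Coh}\:\eta\cdot g\llbracket\tau\rrbracket\sim\theta_f$ and $\tau_g\cdot\mathsf{Coh}\:\varepsilon\sim f\llbracket\theta\rrbracket$. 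So after reassociating $\Sigma$-types I would reuse the chain of equivalences from Theorem \ref{T:adjequiv} to contract $(g,\eta)$ to $(\mathsf{id}_A,\mathsf{refl})$ and then $(\varepsilon,\tau)$ to $(\mathsf{refl},\mathsf{refl})$, both via Lemma \ref{T:isContrSigma}, carrying the two coherences along.

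After these two contractions the only remaining free datum is $\theta : \prod_{x:A}\mathsf{refl}_x = \mathsf{refl}_x$, and the work reduces to identifying what the two coherence types become. I expect both to simplify to the single type $C(\theta) :\equiv (\mathsf{refl}_{\mathsf{refl}}\sim\theta)$: substituting $g=\mathsf{id}_A$, $\eta=\mathsf{refl}$, $\varepsilon=\mathsf{refl}$, $\tau=\mathsf{refl}$ turns $\mathsf{Coh}\:\eta$ and $\mathsf{Coh}\:\varepsilon$ into $\mathsf{refl}_{\mathsf{refl}}$ by Lemma \ref{T:nat_coh}, kills $g\llbracket\tau\rrbracket$ and $\tau_g$ by the unit law, and leaves $\theta_f$ and $f\llbracket\theta\rrbracket$, which at the identity reduce to $\theta$. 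Thus the type becomes $\sum_{\theta}C(\theta)\times C(\theta)$.

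The finish is then a single application of Lemma \ref{T:isContrSigma}: the base $\sum_{\theta}C(\theta)=\sum_{\theta}(\mathsf{refl}_{\mathsf{refl}}\sim\theta)$ is contractible with center $(\mathsf{refl}_{\mathsf{refl}},\mathsf{refl})$, so contracting the pair $(\theta, \text{first coherence})$ leaves the second copy $C(\mathsf{refl}_{\mathsf{refl}}) = (\mathsf{refl}_{\mathsf{refl}}\sim\mathsf{refl}_{\mathsf{refl}}) \equiv \prod_{x:A}(\mathsf{refl}_{\mathsf{refl}_x}=\mathsf{refl}_{\mathsf{refl}_x})$, as required. It is worth noting that this is precisely where non-propositionality arises: the two coherences both pin down $\theta$ against $\mathsf{refl}_{\mathsf{refl}}$, so one is free and yields $\Omega^2$-worth of data.

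The main obstacle I anticipate is the middle step: verifying that both coherences collapse to the same type $C(\theta)$. This is where the two-dimensional bookkeeping bites, since the coherences genuinely couple $\tau$ and $\theta$, and I must handle the action of the identity on $2$-paths (the map $\mathsf{id}_A\llbracket - \rrbracket$, which is not judgmentally the identity) together with the relevant unit laws for concatenation of homotopies-between-homotopies, exactly as the rearrangement step (\ref*{eq:fibtwo}) was needed in Lemma \ref{T:tofiber}. Everything before and after this step is mechanical repackaging of $\Sigma$-types plus Lemma \ref{T:isContrSigma}.
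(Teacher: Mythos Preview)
Your proposal is correct and follows essentially the same route as the paper: reduce to $\mathsf{id}_A$ by equivalence induction, contract $(g,\eta)$ and then $(\varepsilon,\tau)$ via Lemma~\ref{T:isContrSigma} (the paper packages these two contractions as a single citation of Theorem~\ref{T:adjequiv}), observe that both remaining coherences collapse to $\mathsf{refl}_{\mathsf{refl}}\sim\theta$ after handling $\mathsf{id}_A\llbracket - \rrbracket$, and contract $(\theta,\text{first coherence})$ once more. You have also correctly flagged the one nontrivial step, the simplification of the two coherences and of $\mathsf{id}_A\llbracket\theta\rrbracket$, which is exactly where the paper inserts its only non-mechanical equivalence.
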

	\begin{proof}
		By Equivalence Induction \ref{T:equivInduction}, it suffices to show $\mathsf{2adj} \: \mathsf{id}_A 
			\simeq \prod\limits_{x : A} (\mathsf{refl}_{\mathsf{refl}_x} = \mathsf{refl}_{\mathsf{refl}_x})$.
		Observe that
		\begin{align}
			\mathsf{2adj}\: \mathsf{id}_A 
				&\equiv \sum_{g : A \to A} \  \sum_{\eta : g \sim \mathsf{id}_A} \ 
					\sum_{\varepsilon : g \sim \mathsf{id}_A} \ 
					\sum_{\tau : \mathsf{id}_A[\eta] \sim \varepsilon} \ 
					\sum_{\theta : \eta_{g} \sim g[\varepsilon]}
					\mathsf{Coh} \: \eta \cdot g \llbracket \tau \rrbracket \sim \theta 
					\times \tau_g \cdot \mathsf{Coh} \: \varepsilon \sim \mathsf{id}_A \llbracket \theta \rrbracket 
					\nonumber \\
			\label{eq:2adjprev} &\simeq \sum_{\theta : \mathsf{refl} \sim \mathsf{refl}}
				\mathsf{Coh} \: \mathsf{refl} \cdot \mathsf{id}_A \llbracket \mathsf{refl}_{\mathsf{refl}} \rrbracket
					\sim \theta
				\times \mathsf{refl}_{\mathsf{refl}} \cdot \mathsf{Coh} \: \mathsf{refl}
					\sim \mathsf{id}_A \llbracket \theta \rrbracket \\
			&\equiv \sum_{\theta : \mathsf{refl} \sim \mathsf{refl}}
				\mathsf{refl}_{\mathsf{refl}} \sim \theta
				\times \mathsf{refl}_{\mathsf{refl}} \sim \mathsf{id}_A \llbracket \theta \rrbracket \nonumber \\
			&\simeq \sum_{\theta : \mathsf{refl} \sim \mathsf{refl}}
				\mathsf{refl}_{\mathsf{refl}} \sim \theta
				\times \mathsf{refl}_{\mathsf{refl}} \sim \theta \nonumber \\
			&\simeq	\sum_{\theta : \mathsf{refl} \sim \mathsf{refl}} \ 
				\sum_{\mathcal{A} : \mathsf{refl}_{\mathsf{refl}} \sim \theta}
				\mathsf{refl}_{\mathsf{refl}} \sim \theta \nonumber \\
			&\simeq \sum_{u : \sum\limits_{\theta : \mathsf{refl} \sim \mathsf{refl}}
				\mathsf{refl}_{\mathsf{refl}} \sim \theta}
				\mathsf{refl}_{\mathsf{refl}} \sim \mathsf{pr}_1 \: u \nonumber \\
			\label{eq:2adjline} &\simeq \mathsf{refl}_{\mathsf{refl}} \sim \mathsf{refl}_{\mathsf{refl}} \\
			&\equiv \prod_{x : A} \mathsf{refl}_{\mathsf{refl}_x} = \mathsf{refl}_{\mathsf{refl}_x}. \nonumber
		\end{align}
		The equivalence (\ref*{eq:2adjprev}) is from Theorem \ref{T:adjequiv}; 
			we contract $(g, \eta, \varepsilon, \tau)$
			to $(\mathsf{id}_A, \mathsf{refl}, \mathsf{refl}, \mathsf{refl}_{\mathsf{refl}})$.
			The equivalence (\ref*{eq:2adjline}) is an application of Lemma \ref{T:isContrSigma}.
	\end{proof}
	Once again, this result implies any type with non-trivial $\pi_3$ may be used to construct non-trivial
		inhabitants of this type. We know $\pi_3(S^2) = \mathbb{Z}$, which proves:
	\begin{cor}
		The type $\mathsf{2adj} \: \mathsf{id}_{S^2}$ is not a proposition. \qed
	\end{cor}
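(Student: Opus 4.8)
The plan is to read this off the preceding theorem (\texttt{two\_adj/two\_adj\_equiv\_pi\_refl\_eq}) together with the fact that $\pi_3(S^2) = \mathbb{Z}$. First I would instantiate that theorem at $A \equiv B \equiv S^2$ and $f \equiv \mathsf{id}_{S^2}$. The hypothesis that $\mathsf{ishadj}\,\mathsf{id}_{S^2}$ is inhabited is immediate, witnessed by $(\mathsf{id}_{S^2}, \mathsf{refl}, \mathsf{refl}, \mathsf{refl}_{\mathsf{refl}})$, since $\mathsf{id}_{S^2}[\mathsf{refl}] \equiv \mathsf{refl} \equiv (\mathsf{refl})_{\mathsf{id}_{S^2}}$. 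This produces an equivalence
\[
	\mathsf{2adj}\,\mathsf{id}_{S^2} \simeq \prod_{x : S^2}(\mathsf{refl}_{\mathsf{refl}_x} = \mathsf{refl}_{\mathsf{refl}_x}).
\]
Because being a proposition is preserved under equivalence, it suffices to show that the right-hand type fails to be a proposition.

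To do this I would exhibit two elements of $\prod_{x : S^2}(\mathsf{refl}_{\mathsf{refl}_x} = \mathsf{refl}_{\mathsf{refl}_x})$ that cannot be equal. The first is the constant section $c :\equiv \lambda x.\,\mathsf{refl}_{\mathsf{refl}_x}$. For the second, fix a representative $\gamma : \mathsf{refl}_{\mathsf{refl}_{\mathsf{base}}} = \mathsf{refl}_{\mathsf{refl}_{\mathsf{base}}}$ of a nonzero class in $\pi_3(S^2) = \mathbb{Z}$, where $\mathsf{base}$ is the point constructor of $S^2$. I would then build a section $b$ with $b(\mathsf{base}) \equiv \gamma$ by induction on $S^2$: one supplies the point datum $\gamma$ at $\mathsf{base}$ and a coherence over the generating $2$-cell $\mathsf{surf}$, asking that transporting $\gamma$ along $\mathsf{surf}$ in the family $x \mapsto (\mathsf{refl}_{\mathsf{refl}_x} = \mathsf{refl}_{\mathsf{refl}_x})$ returns $\gamma$. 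This is exactly the higher-dimensional analogue of the computation $\mathsf{loop}^{-1} \cdot \mathsf{loop} \cdot \mathsf{loop} = \mathsf{loop}$ that validates the section in the $\pi_1(S^1)$ corollary: the transport acts by conjugation of $\gamma$ by $\mathsf{surf}$, which is trivializable by Eckmann--Hilton-type reasoning in this range of loop spaces.

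Finally I would argue $c$ and $b$ are not equal. A path $c = b$ would, on applying $\mathsf{happly}$ at $\mathsf{base}$, yield a path $\mathsf{refl}_{\mathsf{refl}_{\mathsf{base}}} = \gamma$; passing to the set-truncation $\pi_3(S^2)$ this identifies the class of $\gamma$ with $0 \in \mathbb{Z}$, contradicting the choice of $\gamma$ as a nonzero generator. Hence $\prod_{x : S^2}(\mathsf{refl}_{\mathsf{refl}_x} = \mathsf{refl}_{\mathsf{refl}_x})$, and therefore $\mathsf{2adj}\,\mathsf{id}_{S^2}$, is not a proposition.

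The main obstacle is the construction of the nontrivial section $b$: the point datum is just a chosen generator, but discharging the $2$-cell coherence over $\mathsf{surf}$ is the substantive step, in parallel with the $\pi_1(S^1)$ and $\pi_2(S^2)$ corollaries above. This is also precisely why the statement is recorded here without a formalization, as the HoTT in Lean 3 library lacks the induction principle for $S^2$.
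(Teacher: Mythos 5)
Your proposal is correct and follows the paper's own route exactly: instantiate the preceding theorem \texttt{two\_adj/two\_adj\_equiv\_pi\_refl\_eq} at $f \equiv \mathsf{id}_{S^2}$ and use $\pi_3(S^2) = \mathbb{Z}$ to conclude that $\prod_{x : S^2}(\mathsf{refl}_{\mathsf{refl}_x} = \mathsf{refl}_{\mathsf{refl}_x})$, and hence $\mathsf{2adj}\:\mathsf{id}_{S^2}$, is not a proposition. You in fact supply more detail than the paper does (the explicit construction of the nontrivial section $b$ by $S^2$-induction, with the coherence over $\mathsf{surf}$ discharged by Eckmann--Hilton-style reasoning), which is consistent with the paper's own acknowledgement that this step is left informal because the library lacks the induction principle for $S^2$.
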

	
	Proving this result using function extensionality directly and path induction requires
		an equivalence that writes homotopies as equalities.
		By modularizing the case of $\mathsf{qinv}$, similar to the analogous proof for $\mathsf{adj}$,
		this result may be proven by manipulating $\Sigma$-types and applying
		Lemma \ref{T:isContrSigma} three times, with the formal proof in the library being 44 lines of code.
		As with $\mathsf{adj}$, one would expect this approach to be 40 to 80 lines shorter than one which
		uses function extensionality directly.
		
	\section*{Acknowledgements.} This work was carried out when the first, second, and fourth authors were undergraduates at the University of Western Ontario and was supported through two Undergraduate Student Research Awards and a Discovery Grant, all funded by the Natural Sciences and Engineering Research Council (NSERC) of Canada. We thank NSERC for its generosity.
	
	\bibliographystyle{alpha}
	\bibliography{lmcs7007}

\end{document}